\def\keyFont{\fontsize{8}{11}\helveticabold }
\def\firstAuthorLast{D. Langemann  {et~al.}} 
\def\Authors{Dirk Langemann\,$^{1}$ and Olesia Zavarzina\,$^{2,*}$}
\newtheorem{theo}{Theorem}[section]
 \newtheorem{definition}[theo]{Definition}
 \newtheorem{example}[theo]{Example}
  \newtheorem{lem}[theo]{Lemma}
\newtheorem{propos}[theo]{Proposition}
\newtheorem{rem}[theo]{Remark}
\newtheorem{prob}[theo]{Problem}
\newtheorem{cor}[theo]{Corollary}
\newcommand{\R}{{\mathbb{R}}}
\newcommand{\N}{{\mathbb{N}}}
\newcommand{\Z}{\mathbb{Z}}
\begin{document}
\onecolumn
\firstpage{1}

\title[Plasticity on the real line]{Expand-contract plasticity on the real line} 

\author[\firstAuthorLast ]{\Authors} 
\address{} 
\correspondance{} 

\extraAuth{}

\maketitle

\begin{abstract}

The article deals with plastic and non-plastic sub-spaces $A$ of the real line $\R$ with the usual Euclidean metric $d$. It investigates non-expansive bijections, proves properties of such maps and demonstrates their relevance by hands of examples.
Finally, it is shown that the plasticity property of a sub-space $A$ contains at least two complementary questions, a purely geometric and a topological one. Both contribute essential aspects to the plasticity property and get more critical in higher dimensions and more abstract metric spaces.

\tiny
 \keyFont{ \section{Keywords:} metric space, non-expansive map, plastic space, expand-contract plasticity, Banach space} 
\end{abstract}

\section{Introduction}

Here, we investigate properties of plastic metric spaces. Shortly speaking, a metric space is plastic if every non-expansive bijection is an isometry, cf.\ Sec.~\ref{secbasic}.

We will observe that the plasticity property consists of a geometrical sub-problem and a topological sub-problem. That is the reason why plasticity of
a metric space, which can be easily defined, evolves as a challenging mathematical problem. In particular, we observe that the plasticity of a metric space
is not inherited from sup-spaces, i.\,e.\ from including spaces, and it does not inherit to sub-spaces, i.\,e.\ to included spaces.

In this article, we concentrate on metric spaces which are sub-spaces of the real axis, and already in this apparently simple situation, the typical difficulties come to the light.

The probably first work devoted to the plasticity problem is \citep{FHuDeh}, however, the term "plasticity" appeared much later and the problem itself remained unnoticed for several decades. A short literature survey and the information about the current progress in solution of the problem is found in Sec.~\ref{secplastic}

The paper is organized as follows. Sec.~\ref{secbasic} introduces the basic concepts and illustrates the existence of non-expansive bijections in the
case that the metric space is a union of closed intervals. This case demonstrates the geometrical aspects of the problem. Then, Sec.~\ref{secresults}
discusses the plasticity of metric spaces by means of metric spaces which are unbounded sequences of points, investigates the relevance of accumulation points, continuous subsets and attacks the more topological parts of the plasticity concept. Finally, Sec.~\ref{secconclu} resumes the observations and gives a short outlook to further research.

\section{Basic concepts}\label{secbasic}

We denote a metric space by $(A,d)$ where $A$ is the set of points and $d\,:\,A\times A\to\R_{+}=\{x\in \R\colon x\ge 0\}$ is the distance obeying the known axioms of
positivity, symmetry, non-degeneracy and the triangle inequality.

\subsection{Non-expansive maps}\label{secnonexp}

A map $\varphi\,:\,A\to A$ from the metric space $A$ into itself is called non-expansive if
\begin{equation}\label{eq01}
d(\varphi(x),\varphi(y))\le d(x,y)\;\mbox{ for all }\;x,y\in A
\end{equation}
is fulfilled. If the equality holds for all pairs $x,y\in A$, then $\varphi$ is an isometry.

The condition in Eq.~(\ref{eq01}) is equivalent to the Lipschitz-continuity of the map $\varphi$ on $A$ with Lipschitz constant $1$.
Thus, a non-expansive $\varphi$ is also continuous on $A$.

We will investigate metric spaces $A\subseteq A_{\mathrm{ex}}$ which are embedded in a metric sup-space $(A_{\mathrm{ex}},d_{\mathrm{ex}})$, e.\,g.\
because the space $A_{\mathrm{ex}}$ might be known and well understood, and thus its points or rather a selection of them serve as elements of $A$.
Now, it is obvious that the restriction of the metric space $(A_{\mathrm{ex}},d_{\mathrm{ex}})$ to the set $A$ leads to the metric space $(A,d)$ by the
restriction of the distance $d=d_{\mathrm{ex}}|_{A\times A}$ to the set $A$. It is less obvious whether a metric space $(A,d)$ can be extended to
a sup-set $A_{\mathrm{ex}}$ by choosing an appropriate $d_{\mathrm{ex}}$. But it is always possible, to choose a function
$\hat{d}_{\mathrm{ex}}\,:\,A_{\mathrm{ex}}\times A_{\mathrm{ex}}\to \R_+$, which fulfills the properties of symmetry, non-degeneracy and positivity with $\hat{d}_{\mathrm{ex}}|_{A\times A}=d$, which of course is not a metric in general. Then, we can
define the metric
\[
\widetilde{d}_{\mathrm{ex}}(x,y)=\inf\limits_{n,\{z_0,\ldots,z_n\}}\left[\hat{d}_{\mathrm{ex}}(x,z_0)+\sum\limits_{i=0}^{n-1} \hat{d}_{\mathrm{ex}}(z_i,z_{i+1})
+\hat{d}_{\mathrm{ex}}(z_n,y)\right]
\]
as the infimum over all possible paths of arbitrary length between $x$ and $y$. However, such a metric $\widetilde{d}_{\mathrm{ex}}$ may not really be
an extension. As in the real life, if one builds a new paths, which are shorter, the old ones may no longer be used. In our notation, this means
that it may happen $\widetilde{d}_{\mathrm{ex}}(x,y)<d(x,y)$ for some $x,y\in A$. 

Nevertheless, one may define a real extension $d_{\mathrm{ex}}$ of the metric $d$, which is more artificial and a bit similar to the French railways metric in the following way. Let us fix a point  $x_0$ of the set $A$ and define an arbitrary metric $d_{A_{\mathrm{ex}}}$ on the set 
$(A_{\mathrm{ex}}\setminus A) \cup \{x_0\}$, which might be the discrete metric or any other metric. Although less intuitive, the needed extension is
\[
d_{\mathrm{ex}}=
\begin{cases}
    d(x,y), & \mbox{for } x,y \in A;\\
    d_{A_{\mathrm{ex}}}(x,y), & \mbox{for } x,y \in (A_{\mathrm{ex}}\setminus A) \cup \{x_0\};\\
    d(x,x_0)+d_{A_{\mathrm{ex}}}(x_0,y) & \mbox{for } x\in A, y \in (A_{\mathrm{ex}}\setminus A).
\end{cases}
\]
existing and easily available. Therefore, 
we will not distinguish between $d_{\mathrm{ex}}$ and $d$ in the following, but
use the distance $d$ in the extended metric space as well as in the sub-space.

Oppositely, it is not evident whether the existence of a non-expansive map $\varphi_{\mathrm{ex}}\,:\,A_{\mathrm{ex}}\to A_{\mathrm{ex}}$ provides a non-expansive map $\varphi\,:\,A\to A$ because
the simple restriction $\varphi=\varphi_{\mathrm{ex}}|A$, although still Lipschitz continuous, is not 
necessarily a map into $A$. It might happen that the image $\operatorname{im}\varphi=\varphi(A)\subseteq A_{\mathrm{ex}}$ is 
not a subset of $A$.
The opposite question whether a non-expansive $\varphi\,:\,A\to A$ can be extended to a non-expansive 
map on the extended space $A_{\mathrm{ex}}$ is the question about thr extension of Lipschitz maps preserving the Lipschitz constant. In particular, it is always possible for real-valued functions according to McShane's extension theorem \citep{McSh}. For functions from a subset of $\R^n$ to $\R^n$, the extension to the whole Euclidean space is possible due to Kirszbraun’s theorem \citep{Kir}.
We will see that non-expansive maps pose a lot of interesting questions and some of them can be answered, too.

\subsection{Plastic metric spaces}\label{secplastic}

Let us define a plastic metric space.

\begin{definition}\label{defplastic}
A metric space 
A is called expand-contract plastic (EC-plastic) -- or just plastic -- if every bijective non-expansive map
$\varphi\,:\,A\to A$ is an isometry.
 \end{definition}

Def.~\ref{defplastic} defines a plastic metric space $A$ via the non-existence of any non-expansive bijection of the metric space $A$ to itself, which is not
an isometry. Some simple examples are the non-plastic metric space $A=\R$ with the non-isometric non-expansive bijective map $\varphi\,:\,x\mapsto x/2$
and the plastic metric space $A=[0,1]\subset\R$ with exactly the two non-expansive bijections $\varphi_1=id.$ and $\varphi_2\,:\,x\mapsto 1-x$, which are both isometries.

The only general result concerning plasticity of metric spaces states that every totally bounded metric space is plastic, s.\ \citep{NaiPioWing} for details. In fact, in \citep{NaiPioWing} a more general result was obtained, i.\,e.\ so called strong plasticity of totally bounded metric spaces was shown.

\begin{definition} \label{def-str-plast}  A metric space $A$ is said to be strongly plastic if for every mapping $\varphi\,:\, A \to A$
the existence of points $x, y \in A$ with  $d(\varphi(x),\varphi(y)) > d(x,y)$ implies the existence of two points $\tilde{x}, \tilde{y}\in A$ for which $d(\varphi(\tilde{x}),\varphi(\tilde{y})) < d(\tilde{x},\tilde{y})$ holds true.
\end{definition}

This property and its uniform version were studied in \citep{KZ2023}. It says that any expansion of a distance between two points implies
the existence of two other points which are contracted by the map $\varphi$. Observe it is extremely important not to interchange expansion and contraction.

In \citep{CKOW2016} the following intriguing question was posed.
 \begin{prob}
 Is it true, that the unit ball of an arbitrary Banach space is  plastic?
 \end{prob}

 Observe, that in finite dimensions this question is answered positively since in finite dimensions, the unit ball is compact and thus
 totally bounded. So the question is open only in the infinite dimensional case as well as the following more general problem.

 \begin{prob}
 For which pairs $(X,Y)$ of Banach spaces, every bijective non-expansive  map $\varphi\,:\, B_X(0) \to B_Y(0)$
 between the unit balls is an isometry?
 \end{prob}

 There is a number of relatively recent particular results, devoted to these problems, s.~\citep{AnKaZa}, \citep{HLZ}, \citep{KZ2017}, \citep{Leo} and \citep{Zav}.
 There exists also a circle of problems connected with plasticity property of the unit balls. In \citep{KarZav} and \citep{Zav2}, the so called linear expand-contract plasticity of ellipsoids in separable Hilbert spaces was studied. That means that only the linear non-expansive bijections were considered in the definition of plasticity.

Many natural questions concerning plasticity  seem to have no answer or even have not yet been considered. 
In 2020, E. Behrends \citep{Behr} draw attention to the fact that nobody studied the subsets of the real line with respect to the plasticity problem. He tried to attack this problem and received some results in this direction, however, decided not to publish them. So, the following problem is still open.

\begin{prob}
What characterises plastic sub-spaces of the real line $\R$ with the usual metric $d$?
\end{prob}

In spite of the seeming simplicity of the question, it is not so easy to deal with. Let us first list the previously known results.
As we mentioned before, the set $\R$ itself with the usual metric is not plastic. If one considers any bounded subset, it is already plastic due to its total boundedness.

On the other hand it is easy to show that the set of integers $\Z$ with the same usual metric is plastic in spite of its unboundedness, as well as the set $\R\setminus\Z$. The proof of the plasticity of both mentioned spaces may be found in \citep{NaiPioWing}. In the proof of plasticity of the set $\R\setminus\Z$, one of the possible cases was missed, nevertheless the statement is still correct.

Already, these examples show that there is no simple answer to the question whether a metric space is plastic or not. Rather we could give
the interpretation that there are some critical points, like e.\,g.\ the integers in these examples, which every non-expansive bijection $\varphi$ definitely
has to pass, what relates to the geometry of the metric space $A$, and that there are some parts of the metric space which cannot be glued to each other
like singular points or open intervals, what relates to the topological aspects of plasticity. We see that sub-spaces of the real axis are already
sufficiently multifaceted to study the plasticity problem of metric spaces. The question whether
more general metric spaces are plastic, provokes analogous difficulties and again contains geometrical and topological aspects.

Here, we will generalize the
known results and say something more about plastic sub-spaces of the real line. The previously mentioned results explain why we consider only unbounded sets in what follows.

All over the text, we use the notion $d$ for the usual Euclidean metric $d(x,y)=|x-y|$ for $x,y\in\R$. Round brackets denote open intervals $(x,y)=\{z\in\R\,:\,x<z<y\}$, and square brackets denote closed intervals $[x,y]=\{z\in\R\,:\,x\le z\le y\}$.  


\subsection{A subset of the real axis}\label{asubsetofr}

We have seen that the real axis $\R$ has sufficiently interesting metric sub-spaces for the investigation of plasticity. The Lipschitz condition
in Eq.~(\ref{eq01}) lets us easily decide whether a map $\varphi\,:\,\R\to\R$ is non-expansive or not -- just by the graph of the map $\varphi$, s.\ Fig.~\ref{fig1}. Due to our
considerations in Sec.~\ref{secnonexp}, which is applied here with $A$ as union of intervals and $A_{\mathrm{ex}}=\R$, the map $\varphi$ can be extended -- not necessarily in a unique manner -- as non-expansive function 
$\varphi_{\mathrm{ex}}$ on the entire axis $\R$. Thus $\varphi_{\mathrm{ex}}$ is continuous on $\R$, too.


\begin{figure} 
\begin{center}
\includegraphics[width=7cm]{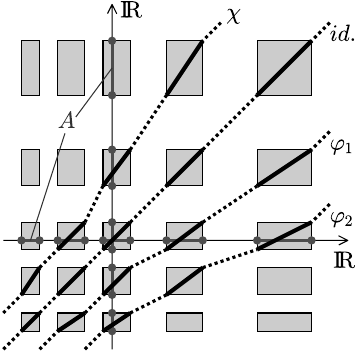}
\end{center}
\caption{Non-expansive maps $\varphi_1$, $\varphi_2$ and $id.$ and an expansive map $\chi$ for a union $A\subset\R$ of closed intervals
of increasing length. The Cartesian product $A\times A$ is given in grey, and the bijections are black.}
\label{fig1}
\end{figure}

Fig.~\ref{fig1} shows examples of bijective maps from the union of intervals
$A=\ldots\cup[a_{2},a_{3}]\cup [a_{4},a_{5}]\cup\ldots\subset\R$ onto itself. In this example, the closed
interval and the interspaces have increasing lengths, in detail $a_{\ell+1}-a_{\ell}\ge a_{\ell-1}-a_{\ell-2}$ for all $\ell\in \Z$. Due to its continuity,
every bijection $\varphi$ passes monotonically a rectangle in $A\times A$. 
In this example with increasing lengths of the respective intervals, we easily detect particular extensions $\varphi_{\mathrm{ex}}\,:\,\R\to\R$ with
$\varphi_{\mathrm{ex}}|_A=\varphi$ and a slope bounded by $1$, because the endpoints of the interspace could be used in Eq.~(\ref{eq01}). 
Hence, the functions $id.$ and $\varphi_i,\, i=1,2$ below the diagonal are non-expansive, and the function $\chi$ above the diagonal is expansive.

\section {Main results}\label{secresults}

Let us start with 
some interesting observations on simple situations of $A$, e.\,g.\ some sets of singular points.

\begin{propos}\label{prop_easy}
  Let $A =\{a_i\}_{i=-\infty}^{+\infty}\subset\R$ be an increasing sequence that obeys
  \begin{equation}\label{eq03}
  d(a_{i-1},a_i)\leq d(a_i,a_{i+1})\;\mbox{ for all }\; i\in \Z 
  \end{equation}
 and
 \begin{equation}\label{eq04}
 d(a_{j-1},a_j)< d(a_j,a_{j+1})\;\mbox{ for at least one }\; i\in \Z.
 \end{equation}
 Then $(A,d)$ is not plastic.
\end{propos}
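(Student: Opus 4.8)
The plan is to produce an explicit non-expansive bijection $\varphi\,:\,A\to A$ which is not an isometry; by Definition~\ref{defplastic} this is exactly what ``not plastic'' requires. The natural candidate is the \emph{backward shift} $\varphi(a_i)=a_{i-1}$. Since $A$ is indexed by all of $\Z$, the reindexing $i\mapsto i-1$ is a bijection of $\Z$, so $\varphi$ is automatically a bijection of $A$ onto itself. This is the only place where the two-sided (doubly unbounded) nature of the sequence enters; for a sequence with a first element neither the forward nor the backward shift is surjective, so this short argument would break down there.

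Next I would verify non-expansiveness. Write $g_i:=d(a_i,a_{i+1})$ for the gap lengths; hypothesis Eq.~(\ref{eq03}) says precisely that $(g_i)_{i\in\Z}$ is non-decreasing. For $i<j$, telescoping gives $d(a_i,a_j)=\sum_{k=i}^{j-1}g_k$ and $d(\varphi(a_i),\varphi(a_j))=d(a_{i-1},a_{j-1})=\sum_{k=i-1}^{j-2}g_k$, whence $d(a_i,a_j)-d(\varphi(a_i),\varphi(a_j))=g_{j-1}-g_{i-1}\ge 0$, because $i-1\le j-1$ and $(g_k)$ is non-decreasing. Thus Eq.~(\ref{eq01}) holds for $\varphi$. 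Finally, $\varphi$ is not an isometry: by Eq.~(\ref{eq04}) there is an index $j$ with $g_{j-1}<g_j$, and then $d(\varphi(a_j),\varphi(a_{j+1}))=d(a_{j-1},a_j)=g_{j-1}<g_j=d(a_j,a_{j+1})$, so $\varphi$ strictly contracts this one pair. Hence $\varphi$ is a non-expansive bijection of $A$ that is not an isometry, and $(A,d)$ is not plastic.

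There is no real obstacle here; the one thing that needs care is the \emph{direction} of the shift. The forward shift $a_i\mapsto a_{i+1}$ is expansive rather than non-expansive — precisely because the gaps grow — so one must shift toward the ``small-gap end'' of the sequence. (Dually, had the gaps been non-increasing, the forward shift would have worked.) I would also note that no topological or continuity considerations are needed for this proposition: $A$ is a set of isolated points whenever it has no accumulation point lying in $A$, and in any case the definition of plasticity only asks for a non-expansive bijection that fails to be an isometry, which is exactly what the backward shift supplies.
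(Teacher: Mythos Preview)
Your argument is correct and follows exactly the paper's approach: the backward shift $\varphi(a_i)=a_{i-1}$ is the witnessing non-expansive, non-isometric bijection. The paper merely states this map without writing out the telescoping verification you supply, so your proof is a fleshed-out version of the same idea.
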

\begin{proof}
The shift $\varphi\,:\,a_i\mapsto a_{i-1}$ is an example of a non-expansive bijection which is not an isometry.
\end{proof}

\begin{rem} The relation sign in Eqs.~(\ref{eq03}) and (\ref{eq04}) might be commonly inverted so that the distances between two subsequent
points of $A$ decrease instead of increase, and the statement remains unchanged.
\end{rem}

Further let us consider sets which are bounded from one side. We will proceed with the following lemma.
\begin{lem}\label{lem}
Let $A\subset\R$ be a set without accumulation points which is bounded from one side. Let $a$ be a minimal -- or maximal -- element
of $A$ and $\varphi\,:\, A\to A$ be a bijective non-expansive map. Then $\varphi(a)=a$.
\end{lem}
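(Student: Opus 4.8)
The plan is to trap $\varphi$ inside finite sub-blocks of $A$, on which plasticity is already known. By the isometry $x\mapsto -x$ of $\R$ we may assume that $A$ is bounded from below and hence unbounded from above, so that $a=\min A$ exists; since $A$ has no accumulation points, $A$ enumerates as $A=\{a_0<a_1<a_2<\dots\}$ with $a_0=a$ and $a_n\to+\infty$ (if $\inf A$ were not attained, or if the $a_n$ did not tend to $+\infty$, there would be an accumulation point). Assume, towards a contradiction, that $\varphi(a_0)=b\neq a_0$; since $a_0$ is the minimum, $b>a_0$.

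First I would extract the monotonicity forced by non-expansiveness. For $x<y$ in $A$ we have $\varphi(y)-\varphi(x)\le|\varphi(y)-\varphi(x)|\le y-x$, so the displacement $x\mapsto\varphi(x)-x$ is non-increasing along $A$. It cannot be positive at every $a_i$: otherwise $\varphi(a_i)>a_i\ge a_0$ for all $i$, whence $a_0\notin\varphi(A)$, contradicting surjectivity. As the displacement is non-increasing, $\{i\ge 0:\varphi(a_i)>a_i\}$ is therefore a non-empty proper initial segment $\{0,1,\dots,k\}$ of the index set; that is, $\varphi(a_i)>a_i$ for $i\le k$ and $\varphi(a_i)\le a_i$ for $i>k$.

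Next I would pass to a $\varphi$-invariant finite block. Put $R:=\max\{\varphi(a_0),\dots,\varphi(a_k)\}$ and choose any index $m>k$ with $a_m\ge R$, which is possible since $a_n\to+\infty$. Then $\varphi$ maps $B_m:=\{a_0,\dots,a_m\}$ into itself: for $i\le k$ one has $\varphi(a_i)\le R\le a_m$, and for $k<i\le m$ one has $\varphi(a_i)\le a_i\le a_m$, so in either case $\varphi(a_i)\in B_m$. Being injective on the finite set $B_m$, $\varphi$ restricts to a bijection of $B_m$. Now $B_m$ is totally bounded, hence plastic, so $\varphi|_{B_m}$ is an isometry; in particular it preserves the diameter $a_m-a_0$ of $B_m$, which is realised by the single pair $\{a_0,a_m\}$. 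Thus $\{\varphi(a_0),\varphi(a_m)\}=\{a_0,a_m\}$, so $\varphi(a_0)\in\{a_0,a_m\}$. Since this holds for every sufficiently large $m$, whereas $\varphi(a_0)$ is one fixed element of $A$, we conclude $\varphi(a_0)=a_0$, contradicting the assumption. The case in which $a$ is the maximal element is symmetric. One could also avoid quoting plasticity of finite spaces: comparing the invariant blocks $B_m\subseteq B_{m+1}$ for all large $m$ shows that $\varphi$ fixes an entire tail $\{a_j:j\ge M\}$ of $A$, after which non-expansiveness between $a_M$ and the points below forces $\varphi(a_i)\ge a_i$ for every $i<M$, so $\varphi$ is the identity on $\{a_0,\dots,a_{M-1}\}$.

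The step I expect to be the main obstacle is the second one. The point is that non-expansiveness alone already makes the displacement $x\mapsto\varphi(x)-x$ monotone, and this, combined with surjectivity, is exactly what forces $\varphi$ to carry every sufficiently long initial block into itself; without such control one cannot confine $\varphi$ to a finite (hence totally bounded) set and the reduction collapses.
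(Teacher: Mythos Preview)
Your proof is correct and takes a genuinely different route from the paper's. The paper argues by orbits: it picks $b$ with $\varphi(b)=a$, shows that the interval $[a,b]$ is forward-invariant for any point that lands in it, and then produces a point $t\notin(a,b)$ whose forward orbit is trapped in $[a,b]$; since the orbit is infinite and bounded, this contradicts the absence of accumulation points. Your approach is more structural. The observation that non-expansiveness alone forces the displacement $x\mapsto\varphi(x)-x$ to be non-increasing is the engine; together with surjectivity it confines $\varphi$ to finite initial blocks $B_m$, and then plasticity of totally bounded spaces (or just the diameter argument for finite subsets of $\R$) does the rest. Your alternative sketch at the end in fact delivers more than the lemma asks: comparing $B_m$ with $B_{m+1}$ pins down $\varphi(a_{m+1})=a_{m+1}$ for all large $m$, and then the displacement monotonicity forces $\varphi$ to be the identity on the remaining finite block --- this is exactly the content of the paper's subsequent corollary, obtained here in one stroke. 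The trade-off is that the paper's orbit argument is entirely self-contained, while yours invokes plasticity of totally bounded spaces as a black box. One small remark: your reduction ``bounded from below and hence unbounded from above'' tacitly reads ``bounded from one side'' as ``bounded from exactly one side''; this is the intended reading (and the one the paper's own proof uses), since for finite $A$ such as $\{0,1\}$ the conclusion $\varphi(a)=a$ can fail.
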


\begin{proof}
Without loss of generality we may consider the case when $a$ is a minimum.
  Assume $\varphi(a)\neq a$. Then there is $b\in A$ such that $\varphi(b)=a$.

Claim: \textit{Let be $c\in A$. Then $c\leq b$ implies $\varphi^n(c)\leq b$ for every $n\in \N$.}

{\it Proof of the Claim}. We will use the induction in $n$.
Indeed, if $\varphi^n(c)\leq b$ and $\varphi^{n+1}(c)> b$ we have
  $$d(\varphi^n(c),b)\geq d(\varphi^{n+1}(c),a)  > d(b,a).$$
   {\it This contradiction completes the proof of the Claim.}
   
   Since  $$d(a,b)\geq d(\varphi(a),\varphi(b))=d(\varphi(a),a),$$
  we have $\varphi(a)\leq b$. Thus the Claim provides $\varphi^n(a)\leq b$ for every $n\in \N$.
   Now, the segment $[a,b]$ is a trap for those points, which were mapped there. Our aim is to find such a ``trapped'' point out of the interior of the segment $[a,b]$ and show that this leads to a contradiction. There are only two possible cases.
   
{\bf Case 1:} $\varphi(a)= b$. In this case points $a$ and $b$ were swapped by $\varphi$. Then such a ``trapped'' point is the closest from the right-hand side point to $b$. That means, there is $c>b$ such that $d(b,c)<d(b,d$) for any $d>b$. Such point $c$ exists since $A$ is unbounded from above and there is no accumulation points. The point $c$ cannot be mapped outside the segment $[a,b]$ since it gives the contradiction with non-expansiveness of $\varphi$.

{\bf Case 2:} $\varphi(a)< b$. With such a condition a ``trapped'' point is $\varphi(a)$ itself. 

In both cases we have a point $t$ which does not belong to the interior of the segment $[a,b]$ such that $\varphi(t)$ belongs to this interior. Consider an orbit of this point $t$, i.\,e.\ the set $\{\varphi^n(t)\}_{n=1}^\infty$. Due to the bijectivity of $\varphi$ this orbit does not have repeating elements. Thus we have obtained a bounded infinite subset in $A$ which contradicts the fact that $A$ does not have accumulation points.
\end{proof}

\begin{rem}
The condition about absence of accumulation points in Lemma~\ref{lem} cannot be omitted.
\end{rem}
This remark is confirmed by the following example.
\begin{example}\label{example1}
Let $A=\Z_+\cup Q$, where $Q=\{\frac{1}{4}+\frac{1}{n}, n\geq 4\}$. The bijective non-expansive map $\varphi$ is
$$
\varphi(a)=
\begin{cases}
 a-1, & \mbox{for } a\in \N, \\[0.3ex]
\frac{1}{2}, & \mbox{for } a=0, \\[0.3ex]
 \frac{1}{4}+\frac{1}{n+1}, & \mbox{for } a=\frac{1}{4}+\frac{1}{n}\in Q.
\end{cases}
$$
We see that $\varphi$ is bijective and it does not save the minimal element of $A$.
We check that it is non-expansive.
\begin{enumerate}
  \item For all $a,b\in \N$, the isometry $d(\varphi(a),\varphi(b))=d(a,b)$ is valid.
  \item For $a\in \N$, $b=0$, it holds $d(\varphi(a),\varphi(b))=|a-\frac{3}{2}|<a=d(a,b)$.
  \item For $a\in \N$, $b=\frac{1}{4}+\frac{1}{n}\in Q$, we have $d(\varphi(a),\varphi(b))=|a-\frac{5}{4}-\frac{1}{n+1}|<|a-\frac{1}{4}-\frac{1}{n}|=d(a,b)$.
  \item For $a=0$, $b=\frac{1}{4}+\frac{1}{n}\in Q$, it holds $d(\varphi(a),\varphi(b))=|\frac{1}{4}-\frac{1}{n+1}|<|\frac{1}{4}+\frac{1}{n}|=d(a,b)$.
  \item In the case $a=\frac{1}{4}+\frac{1}{n}\in Q$, $b=\frac{1}{4}+\frac{1}{m}\in Q$, without loss of generality we may assume $n<m$. Then 
  \[
  d(\varphi(a),\varphi(b))=\frac{1}{n+1}-\frac{1}{m+1}<\frac{1}{n}-\frac{1}{m}=d(a,b).
  \]
\end{enumerate}
\end{example}
The described set is depicted on the Fig.~\ref{fig2}, left.

Lemma~\ref{lem} immediately implies the following corollary.
\begin{cor}\label{cor}
Let $A\subset\R$ be an unbounded set without accumulation points. Let $A$ have a minimal or maximal element and let $\varphi\,:\,A\to A$ be a bijective non-expansive map. Then $\varphi$ is an isometry, moreover, the identity.
\end{cor}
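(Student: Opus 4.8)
The plan is to bootstrap Lemma~\ref{lem}: once we know that a bijective non-expansive self-map of a one-sidedly bounded, accumulation-point-free set must fix the extreme element, we can peel off that element and repeat.

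First I would reduce to the case that $A$ has a \emph{minimal} element $a_0$. If instead $A$ has a maximal element, replace $A$ by $-A=\{-x\colon x\in A\}$ and $\varphi$ by $\psi(x)=-\varphi(-x)$; then $\psi$ is again a bijective non-expansive self-map, $-A$ again has no accumulation points and is unbounded, and now $-A$ has a minimum, so it suffices to treat this case. Since $A$ has a minimum it is bounded below, and being unbounded it is therefore unbounded \emph{above}. Because $A$ has no accumulation points, every intersection $A\cap[a_0,a_0+k]$ is a bounded discrete set and hence finite; consequently $A$ can be listed as a strictly increasing sequence $A=\{a_0<a_1<a_2<\cdots\}$ with $a_n\to+\infty$.

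Next I would show by induction on $n\in\N$ that $\varphi(a_n)=a_n$. The case $n=0$ is precisely Lemma~\ref{lem}. For the inductive step, assume $\varphi(a_0)=a_0,\ldots,\varphi(a_{n-1})=a_{n-1}$. Then $\varphi$ maps the finite set $\{a_0,\ldots,a_{n-1}\}$ onto itself, so, $\varphi$ being a bijection of $A$, it maps $A_n:=A\setminus\{a_0,\ldots,a_{n-1}\}=\{a_n,a_{n+1},\ldots\}$ bijectively onto itself. The restriction $\varphi|_{A_n}\colon A_n\to A_n$ is thus a bijective non-expansive map; the set $A_n$ inherits from $A$ having no accumulation points and being unbounded from above, and it has $a_n$ as minimal element. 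Hence Lemma~\ref{lem}, applied to $A_n$, yields $\varphi(a_n)=a_n$.

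Since $\{a_n\}_{n\ge 0}$ exhausts $A$, it follows that $\varphi=\mathrm{id}_A$, which is in particular an isometry, and (undoing the reduction $x\mapsto -x$ if it was used) the same holds for the original $\varphi$. Most of this is bookkeeping once Lemma~\ref{lem} is in hand; the only points needing a little care are verifying that after fixing $a_0,\ldots,a_{n-1}$ the map $\varphi$ really does restrict to a bijection of $A_n$ — which follows from bijectivity of $\varphi$ together with $\varphi(\{a_0,\ldots,a_{n-1}\})=\{a_0,\ldots,a_{n-1}\}$ — and that $A_n$ still satisfies all hypotheses of the lemma, in particular that deleting finitely many points leaves $A_n$ unbounded, which is exactly where the hypothesis that $A$ is unbounded on the side opposite the extreme element is used.
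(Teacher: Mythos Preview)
Your proof is correct and follows essentially the same strategy as the paper's: reduce to the case of a minimum, then repeatedly apply Lemma~\ref{lem} to peel off the current minimal element and induct on the remaining tail. Your version is in fact a little more careful than the paper's---you explicitly enumerate $A$ as a sequence, verify that $\varphi$ restricts to a bijection of $A_n$, and note that $A_n$ remains unbounded so that Lemma~\ref{lem} still applies---but the underlying argument is the same.
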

\begin{proof}
Without loss of generality, we may consider the case when $a$ is a minimum.
Let us show that $\varphi(x)=x$ for every $x\in A$. Indeed, for the minimal element $a$ Lemma~\ref{lem} ensures that $\varphi(a)=a$. Now suppose for some fixed $y\in A$ the condition $\varphi(x)=x$ holds for every $x<y, x\in A$. Consider 
\[
A_1=A\setminus\Big\{\bigcup_{x\in A, x<y}\{x\}\Big\}. 
\]
Then $\varphi|_{A_1}\colon A_1 \to A_1$ is a bijective non-expansive map, and $y$ is a minimal element. Then $\varphi(y)=y$ due to Lemma~\ref{lem}.
\end{proof}
Proposition 4.1 in \citep{NaiPioWing} states that for convex (in the sense of the same article) metric spaces hereditarily EC-plasticity implies boundedness. Moreover, for convex subsets in Euclidean $\R^n$ hereditarily EC-plasticity and boundedness are equivalent. However, the authors note that convexity is a too strong condition.

  In \citep{NaiPioWing}, Theorem 4.3 states that an unbounded metric space with at least one accumulation point contains a non-plastic space. Corollary~\ref{cor} demonstrates that the presence of an accumulation point is essential in the mentioned theorem, since it allows to build examples of unbounded hereditarily plastic spaces. 
  
Let us go back to Example~\ref{example1} and remark another interesting property of non-expansive bijections on $\R$. Suppose we have a set $A\subset\R$ and a function $\varphi\colon A\to A$.  We will say that $\varphi$ preserves the relation ``between'' on the set $A$ if for any $x,y,z \in A$ with $x<y<z$ we have $\varphi(x)<\varphi(y)<\varphi(z)$.  Example~\ref{example1} shows that non-expansive bijections do not have to preserve the relation ``between''. Surprisingly, there is an example demonstrating the same property with a set without any accumulation points.
\begin{example}\label{example2}
Let $A=\N\cup Q$, where $Q=\{2k, k\in \Z_-\}$. The bijective non-expansive map $\varphi$ is defined by
$$
\varphi(a)=
\begin{cases}
 a+6, & \mbox{if } a\leq-4, \\
  a+3, & \mbox{otherwise}.
\end{cases}
$$
The map $\varphi$ does not preserve the relation ``between'' since $-4<-2<0$ but $\varphi(-2)<\varphi(-4)<\varphi(0)$.
Let us check that $\varphi$ is non-expansive.

\begin{enumerate}
  \item If both $a,b\geq-2$ or both $a,b\leq-4$, the non-expansiveness of $\varphi$ is obvious.
  \item If $a\geq-2$ and $b\leq-4$, then $d(\varphi(a),\varphi(b))=|a-b-3|\leq |a-b|=d(a,b)$. Only for $a=-2$ and $b=-4$, the inequality $a-b<3$ is valid, but even in this case, the previous inequality holds true.
\end{enumerate}
\end{example}
The described set is depicted on the Fig.~\ref{fig2}, right.
\begin{figure}
    \centering
        \includegraphics[width=6cm]{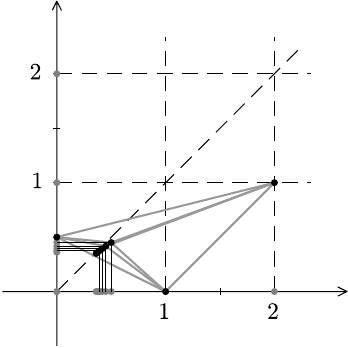}
            \hspace*{2em}
            \includegraphics[width=6cm]{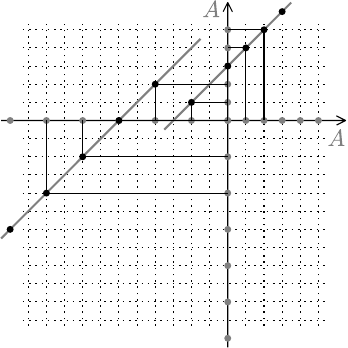}

    \caption{Left: Illustration of Exp.~\ref{example1}. Right: and of Exp.~\ref{example2}. The grey dots on the axes indicate $A$. The black dots mark the respective bijection. Clearly, no connection of two points has a slope larger than $1$.}
    \label{fig2}
\end{figure}

Further we are going to present a sufficient condition for a set in $\R$ to be plastic.
Let us introduce the set
$$D_A =\{p\in\R \colon p=d(a,b) \text{ for some } a,b \in A \text{ with } [a,b]\cap A=\{a,b\}\}.$$
Obviously, several pairs of points may be situated on the same distance. That is why for every $p\in D_A$ we call its multiplicity the number of pairs of points in $A$ which are on the distance $p$. This multiplicity may be finite or infinite.
\begin{theo}\label{theo38}
Let $A\subset\R$  have no accumulation points and  let $D_A$ have a maximum of finite multiplicity or a minimum of finite multiplicity. Then $(A,d)$ is a plastic metric space.
\end{theo}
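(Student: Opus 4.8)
The plan is to dispose of the easy shapes of $A$, isolate two general lemmas about non-expansive bijections of discrete sets, and then reduce the statement to the production of a single bounded orbit of $\varphi$. First I would reduce to the bi-infinite case: if $A$ is bounded it is totally bounded, hence plastic by \citep{NaiPioWing}; if $A$ is unbounded but bounded from one side, then, having no accumulation points, it has a maximal or a minimal element, so Corollary~\ref{cor} already gives the claim. Thus we may assume $A=\{a_i\}_{i\in\Z}$ with $a_i\nearrow+\infty$ and $a_i\searrow-\infty$, and we fix a non-expansive bijection $\varphi\colon A\to A$; the goal is to show $\varphi$ is an isometry. I would treat in detail the case in which $m:=\min D_A$ exists with finite multiplicity and indicate at the end the changes needed when $D_A$ has a maximum of finite multiplicity.

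The two lemmas are the following. \emph{Lemma A.} If $B\subset\R$ has no accumulation points and $\psi\colon B\to B$ is a non-expansive bijection with two distinct fixed points, then $\psi=\mathrm{id}_B$. Indeed, a fixed point $p$ forces every forward orbit $\{\psi^n(x)\}_n$ to lie in the bounded set $[\,p-|x-p|,\,p+|x-p|\,]$, hence to be finite, hence (by bijectivity) each $x$ is periodic. If $\psi\ne\mathrm{id}$, pick $x$ with $\psi(x)\ne x$ and let $C$ be its cycle, $|C|\ge 2$; since $\psi$ maps the finite set $C$ onto itself, $\sum_{u,v\in C}d(\psi u,\psi v)=\sum_{u,v\in C}d(u,v)$ together with $d(\psi u,\psi v)\le d(u,v)$ forces $\psi|_C$ to preserve all pairwise distances, so $\psi|_C$ is the reflection of $C$ about $\mu:=\tfrac12(\min C+\max C)$. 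A short non-expansiveness computation at $\min C$ and $\max C$ then shows that every fixed point of $\psi$ lies strictly between $\min C$ and $\max C$ and equals $\mu$, so the two fixed points coincide, a contradiction. \emph{Lemma B.} If $B$ has no accumulation points and a non-expansive bijection $\psi\colon B\to B$ has one bounded orbit, then $\psi$ is an isometry. That orbit is finite, so some $z$ is $\psi$-periodic of period $p$; then $\psi^p$ fixes $z$, so (same boundedness argument) every $\psi^p$-orbit is finite and $B$ is partitioned into finite $\psi^p$-cycles. If all are trivial, $\psi^p=\mathrm{id}$; otherwise a cycle $C$ with $|C|\ge2$ gives $\psi^{p|C|}$ two fixed points ($z$ and the points of $C$), so $\psi^{p|C|}=\mathrm{id}$ by Lemma~A. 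Either way $\psi$ has finite order, so $\psi^{-1}$ is a power of $\psi$ and hence non-expansive, so $\psi$ is an isometry.

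The case $m=\min D_A$ of finite multiplicity is now immediate. For $a<b$ in $A$ one has $d(a,b)=\sum(\text{gaps in }[a,b])\ge m$, so $m$ is the least distance between two points of $A$, and $d(a,b)=m$ forces $[a,b]\cap A=\{a,b\}$; hence the pairs realising $m$ are exactly the consecutive pairs at distance $m$, finitely many by the multiplicity hypothesis. If $\{a,b\}$ is such a pair then $\varphi(a)\ne\varphi(b)$, so $m\le d(\varphi(a),\varphi(b))\le d(a,b)=m$, and $\varphi$ carries each such pair to another such pair; the induced map on the finite set of these pairs is injective, hence a permutation. Therefore $\varphi$ permutes the finite set $V$ of endpoints of these pairs, the $\varphi$-orbit of any point of $V$ stays inside $V$ and is bounded, and Lemma~B gives that $\varphi$ is an isometry.

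Finally, the case where $D_A$ has a maximum $M$ of finite multiplicity follows the same scheme: by Lemma~B it suffices to produce one bounded $\varphi$-orbit, built from the finitely many widest gaps. The place where I expect the real work to be is that non-expansiveness does \emph{not} prevent $\varphi$ from bringing the two endpoints of a widest gap strictly closer together, so the clean ``extremal pairs are permuted'' step of the minimum case is unavailable; it has to be replaced by a more careful argument showing that the finitely many $M$-gaps act as barriers $\varphi$ cannot cross, so that $\varphi$ leaves invariant the (bounded) block of $A$ lying between the leftmost and the rightmost $M$-gap. Once such a bounded invariant set is secured, Lemma~B concludes exactly as before.
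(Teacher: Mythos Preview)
Your minimum case is correct and is organised differently from the paper's argument. The paper shows directly that $\varphi$ maps the finite endpoint set $X_m$ onto itself, deduces that $\varphi|_{X_m}$ is an isometry, splits into the two cases $\varphi(x_1)=x_1$ or $\varphi(x_1)=x_{2k}$, checks that $\varphi$ is then the identity (resp.\ the reflection) on $[x_1,x_{2k}]\cap A$, and extends outward via the trapped-orbit idea of Lemma~\ref{lem}. Your Lemmas~A and~B repackage this more cleanly: once $\varphi$ permutes the finite set $V$ you get a bounded orbit and are done, with no need to separate identity from reflection or to run the induction of Corollary~\ref{cor} by hand. Both routes rest on the same two ingredients (a finite $\varphi$-invariant set, and a trapped-orbit/finite-order argument), but yours isolates them as standalone lemmas.

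For the maximum case there is a genuine gap in your proposal, and it stems from a wrong belief: you assert that ``the clean `extremal pairs are permuted' step is unavailable'', but in fact $\varphi$ \emph{does} permute the $M$-pairs. One argues through $\varphi^{-1}$. Let $\{x,y\}$ be an $M$-pair with $x<y$, and list the points of $A$ between $u:=\min(\varphi^{-1}x,\varphi^{-1}y)$ and $v:=\max(\varphi^{-1}x,\varphi^{-1}y)$ as $u=v_0<v_1<\dots<v_r=v$. For $0<j<r$ one has $\varphi(v_j)\notin[x,y]$, while $\{\varphi(v_0),\varphi(v_r)\}=\{x,y\}$; since $d(v_0,v_1)\le M$ and $d(v_{r-1},v_r)\le M$, non-expansiveness forces $\varphi(v_1)$ to lie on the same side of the gap as $\varphi(v_0)$ and $\varphi(v_{r-1})$ on the same side as $\varphi(v_r)$. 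If $r\ge2$ these sides are opposite, so some consecutive pair $v_j,v_{j+1}$ with $1\le j\le r-1$ has images on opposite sides of the $M$-gap, giving $d(\varphi(v_j),\varphi(v_{j+1}))>M\ge d(v_j,v_{j+1})$, a contradiction. Hence $r=1$, i.e.\ $u,v$ are consecutive, and $d(u,v)\ge d(x,y)=M$ together with $d(u,v)\in D_A$ yields $d(u,v)=M$. Thus $\varphi^{-1}$ (and hence $\varphi$) permutes the finitely many $M$-pairs, and your own Lemma~B finishes immediately; no separate barrier argument is needed. The paper, for its part, simply writes ``without loss of generality'' and treats only the minimum case, so this step is not spelled out there either.
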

\begin{proof}
Without loss of generality, we may assume that $D_A$ has a minimum $a\in \R$ of finite multiplicity $k\in \N$. Let us denote
$$X_a= \{x_n\in A, n=1,\dots,2k,  d(x_i,x_{i+1})=a, i=1,3,\dots,2k-1 \}.$$
Let us take $x_i\leq x_j$ for all $i,j$ with $1\le i<j\le 2k$. 
Consider an arbitrary non-expansive bijection $\varphi\,:\,A\to A$. Due to the non-expansiveness of $\varphi$, we may conclude that $\varphi$ maps $X_a$ bijectively onto itself. Thus $\varphi|_{X_a}$ is an isometry on $X_a$. In particular, we find $d(x_1, x_{2k})=d(\varphi(x_1),\varphi(x_{2k}))$. Since this distance is the biggest one on $X_a$, either $\varphi(x_1)=x_1$ and $\varphi(x_{2k})=x_{2k}$ or $\varphi(x_1)=x_{2k}$ and $\varphi(x_{2k})=x_1$. We will refer them as cases 1 and 2, respectively. In the first case, obviously, for every $x\in A$ with $x_1<x<x_{2k}$, we get $\varphi(x)=x$, so, in this case $\varphi|_{[x_1,x_{2k}]\cap A}$ is the identity. In the second case, if the structure of $A$ allows it, $\varphi|_{[x_1,x_{2k}]\cap A}$ is the inversion, called total symmetry. Further, following the similar procedure as in Lemma~\ref{lem} we have that in the first case $\varphi$ is the identity, in the second case $\varphi$ is the total symmetry.
\end{proof}
\begin{rem}\label{rem39}
The conditions of Theorem~\ref{theo38} are sufficient but not necessary for the plasticity of a set without accumulation points.
\end{rem}
To make sure that the previous Remark~\ref{rem39} is true, one may consider the space $(\Z,d)$. For $D_\Z$ the minimum and the maximum are equal to 1 and have infinite multiplicity, but the space is plastic. However, we constructed the next example, which is less trivial, to show that plastic spaces which do not satisfy the condition of the previous theorem may have richer structure.
\begin{example}
Let $A=\{a_i\}_{i=-\infty}^{i=\infty} \subset \R$, where $\{a_i\}_{i=-\infty}^{i=\infty}$ is an increasing sequence such that
$$
d(a_i,a_{i+1})=
\begin{cases}
  |k|+1, & \mbox{for } i=2k, k\in \Z, \\[0.3ex]
  \frac{1}{k+1}, & \mbox{for } i=2k-1, k\in \N, \\[0.3ex]
  \frac{1}{|k|+2}, & \mbox{for } i=2k-1, k\in \Z_{-}.
\end{cases}
$$
The corresponding $D_A$ has no minimal or maximal element. However, $(A,d)$ is plastic. In fact, let $\varphi\,:\,A\to A $ be a non-expansive bijection. Then
$$d(\varphi(a_0),\varphi(a_1))\leq d(a_0,a_1)=1.$$
Suppose $d(\varphi(a_0),\varphi(a_1))=\frac{1}{n}$, where $n\geq 2$. Consider the open ball with the radius $n-1$ centered in $\varphi(a_0)$. Due to the structure of $A$, this ball contains only the point $\varphi(a_1)$ except for the centre. On the other hand, the open ball with the radius $n-1$ centered in $a_0$ for $n\geq 3$ contains more than two points, and for $n=2$, it contains two points but does not contain $a_1$. In both cases, we have a contradiction to the non-expansiveness of the map $\varphi$. That is why the only possible option is $$d(\varphi(a_0),\varphi(a_1))=d(a_0,a_1)=1.$$
Further, just in the same way as in Theorem~\ref{theo38} we have that $\varphi$ is either the identity or the inversion.
\end{example}
Now let us speak about the subsets which contain a continuous part. One may prove the following statement in the same way as the Proposition \ref{prop_easy}.
\begin{propos}\label{prop_int_np}
  Let be 
  \[
  A =\bigcup_{i=-\infty}^{+\infty}(a_i, b_i)\subset \R,
  \]
  where $ b_i<a_{i+1}$ be such a sequence of intervals that \begin{equation}\label{eqn1}
  d(a_{i},b_i)\leq d(a_{i+1},b_{i+1})
  \end{equation}
    and 
    \begin{equation}\label{eqn2}
    d(b_{i-1},a_{i})\leq d(b_{i},a_{i+1})
    \end{equation}
    for all $i\in \Z$. Furthermore, there exists $j\in \Z$ such that 
    \begin{equation}\label{eqn3}
    d(a_j,b_j)< d(a_{j+1},b_{j+1}) \text{ or } d(b_{i-1},a_{i})< d(b_{i},a_{i+1}).
    \end{equation}
     Then $(A,d)$ is not plastic.
\end{propos}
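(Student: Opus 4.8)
The plan is to imitate the proof of Proposition~\ref{prop_easy}: instead of shifting a sequence of points, I would shift the whole configuration of intervals one step ``to the left'', which sends longer intervals and wider gaps onto shorter intervals and narrower gaps and therefore ought to be non-expansive. Write $\ell_i=d(a_i,b_i)$ for the length of the $i$-th component and $g_i=d(b_{i-1},a_i)>0$ for the width of the gap in front of it, and define $\varphi\colon A\to A$ on each component by the increasing affine bijection onto the previous one,
\[
\varphi(x)=a_{i-1}+\frac{\ell_{i-1}}{\ell_i}\,(x-a_i)\quad\mbox{ for }\;x\in(a_i,b_i).
\]
Since $\{(a_i,b_i)\}_{i\in\Z}$ is exactly the collection of connected components of $A$ and so is $\{(a_{i-1},b_{i-1})\}_{i\in\Z}$, the map $\varphi$ is a well-defined bijection of $A$ onto itself.

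The first thing to record is monotonicity: (\ref{eqn1}) gives $\ell_{i-1}\le\ell_i$ and (\ref{eqn2}) gives $g_{i-1}\le g_i$ for all $i$. In particular every affine piece has slope $\ell_{i-1}/\ell_i\le 1$, so $\varphi$ contracts distances within each component. The substantive step is non-expansiveness for $x\in(a_i,b_i)$, $y\in(a_j,b_j)$ with $i<j$: here I would write the Euclidean distance as a telescoping sum of interval pieces and gap pieces,
\[
y-x=(b_i-x)+\sum_{k=i+1}^{j}g_k+\sum_{k=i+1}^{j-1}\ell_k+(y-a_j),
\]
and, using $b_{i-1}-\varphi(x)=\frac{\ell_{i-1}}{\ell_i}(b_i-x)$ and $\varphi(y)-a_{j-1}=\frac{\ell_{j-1}}{\ell_j}(y-a_j)$, the analogous decomposition over the shifted components,
\[
\varphi(y)-\varphi(x)=\frac{\ell_{i-1}}{\ell_i}(b_i-x)+\sum_{k=i}^{j-1}g_k+\sum_{k=i}^{j-2}\ell_k+\frac{\ell_{j-1}}{\ell_j}(y-a_j).
\]
Then one compares the two right-hand sides summand by summand: the two endpoint terms shrink because the slopes are $\le 1$, and after the reindexings $\sum_{k=i}^{j-1}g_k=\sum_{m=i+1}^{j}g_{m-1}$ and $\sum_{k=i}^{j-2}\ell_k=\sum_{m=i+1}^{j-1}\ell_{m-1}$ the gap and length sums shrink because $g_{m-1}\le g_m$ and $\ell_{m-1}\le\ell_m$. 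Hence $\varphi(y)-\varphi(x)\le y-x$, with the usual convention that empty sums (occurring when $j=i+1$) vanish; combined with the within-component case this shows $\varphi$ is non-expansive on all of $A$.

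It remains to see that $\varphi$ is not an isometry, which is where (\ref{eqn3}) enters. If $d(a_j,b_j)<d(a_{j+1},b_{j+1})$ for some $j$, then on the component $(a_{j+1},b_{j+1})$ the slope $\ell_j/\ell_{j+1}$ is strictly less than $1$, so $\varphi$ strictly contracts every pair of distinct points there. If instead $d(b_{j-1},a_j)<d(b_j,a_{j+1})$, i.e.\ $g_j<g_{j+1}$, then for $x\in(a_j,b_j)$ and $y\in(a_{j+1},b_{j+1})$ the term-by-term comparison above contains the strict entry $g_j<g_{j+1}$, giving $\varphi(y)-\varphi(x)<y-x$. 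In either case $\varphi$ is a non-expansive bijection moving some pair of points strictly closer together, hence not an isometry, and $(A,d)$ is not plastic.

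I expect the only real obstacle to be the bookkeeping in the cross-interval estimate: lining up the index ranges of the two telescoping sums so that monotonicity of $(\ell_k)$ and $(g_k)$ applies term by term, and making sure the shift goes toward the shorter intervals and narrower gaps --- a shift to the right would instead produce an expansive map, like $\chi$ in Figure~\ref{fig1}.
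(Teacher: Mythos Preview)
Your proof is correct and follows exactly the strategy the paper indicates (``in the same way as Proposition~\ref{prop_easy}''): the component-wise affine left-shift is the natural interval analogue of the point shift $a_i\mapsto a_{i-1}$, and your telescoping, term-by-term comparison cleanly supplies the details the paper omits. The bookkeeping is right, including the edge case $j=i+1$ and the two branches of~(\ref{eqn3}).
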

\begin{rem} In the same way as in Proposition \ref{prop_easy} the relation signs in Eqs.~(\ref{eqn1}), (\ref{eqn2})  and (\ref{eqn3}) might be commonly inverted. 
\end{rem}

Here is one more observation.
\begin{propos}
  Let $A\subset\R$ contain an interval $(a, +\infty)$ or $(-\infty, a)$. Then $(A,d)$ is not plastic.
\end{propos}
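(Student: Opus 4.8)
The plan is to exhibit, for such an $A$, an explicit bijective non-expansive map $\varphi\colon A\to A$ that is not an isometry; by Definition~\ref{defplastic} this proves $(A,d)$ is not plastic. First I would dispose of the second alternative: if $A\supseteq(-\infty,a)$, then $-A:=\{-x:x\in A\}$ contains $(-a,+\infty)$, and since $x\mapsto -x$ is an isometry of $\R$ carrying $A$ onto $-A$, plasticity of $A$ is equivalent to plasticity of $-A$; so it suffices to treat the case $A\supseteq(a,+\infty)$.

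The underlying idea is the same as for the non-plastic space $\R$ with $x\mapsto x/2$, but localised: instead of compressing all of $\R$, I only compress a far right tail of $A$ --- which is legitimate precisely because $A$ contains an entire half-line, not merely a bounded interval --- and leave the (possibly complicated) rest of $A$ fixed. Concretely, fix $b:=a+1$, so $[b,+\infty)\subseteq A$, and fix a strictly increasing $1$-Lipschitz bijection $g\colon[0,+\infty)\to[0,+\infty)$ with $g(0)=0$ that is not an isometry; the explicit choice $g(t)=t-1+e^{-t}$ works, since $g'(t)=1-e^{-t}\in[0,1)$. Note $g(t)\le t$ for all $t\ge0$ (either directly, or from $1$-Lipschitzness together with $g(0)=0$). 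Then define $\varphi$ to be the identity on $A\cap(-\infty,b]$ and $\varphi(x)=b+g(x-b)$ on $[b,+\infty)$.

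It then remains to check three things. (i) $\varphi$ is well defined (the two formulas agree at $x=b$ because $g(0)=0$) and maps $A$ bijectively onto $A$: on $A\cap(-\infty,b]$ it is the identity, and on $[b,+\infty)$ it is a strictly increasing bijection onto $[b,+\infty)\subseteq A$, the two image sets meeting only at $b$. (ii) $\varphi$ is non-expansive, by a short case analysis on the positions of $x,y$ relative to $b$: for $x,y$ both $\le b$ it is the identity; for $x,y$ both $\ge b$ it is $1$-Lipschitz because $g$ is; and for $x\le b\le y$ one writes $\varphi(y)-\varphi(x)=(b-x)+g(y-b)\le(b-x)+(y-b)=y-x$, the inequality being exactly $g(y-b)\le y-b$. (iii) $\varphi$ is not an isometry: e.g.\ $b+1,b+2\in A$ while $|\varphi(b+1)-\varphi(b+2)|=|g(1)-g(2)|<1=|(b+1)-(b+2)|$.

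I do not anticipate a real obstacle. The only subtlety worth stating explicitly is the one already flagged: the construction relies on the fact that every point on which $\varphi$ acts nontrivially, and every value it takes, still lies in $A$, and this is exactly what forces the use of a genuine half-line rather than a general interval --- for a bounded interval $A$ is totally bounded and hence plastic, so no such $\varphi$ can exist there.
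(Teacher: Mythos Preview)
Your proof is correct and follows essentially the same approach as the paper: leave the part of $A$ outside the half-line fixed and apply a non-isometric $1$-Lipschitz bijection of the half-line onto itself. The paper's version is simply more economical, taking $\varphi(x)=\tfrac{x+a}{2}$ on $(a,+\infty)$ and the identity elsewhere, so that no auxiliary point $b=a+1$ or function $g$ is needed.
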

\begin{proof}
Without loss of generality, we discuss the case with $(a, +\infty)$. Let us define the map $\varphi$ with
$$
\varphi(x)=
\begin{cases}
  \varphi(x)=x, & \mbox{if } x\notin (a, +\infty),  \\[0.3ex]
  \varphi(x)=\frac{x+a}{2}, & \mbox{otherwise}.
\end{cases}
$$
This map is non-expansive, bijective and -- at the same time -- 
not an isometry.
\end{proof}
In \citep{NaiPioWing}, Theorem 3.9 shows the plasticity of the space $\R\setminus\Z$. Unfortunately, in the proof misses the case that the non-expansive bijection is a symmetry. However, the statement itself is true.
One may use the same reasoning to prove the next proposition.
\begin{propos}\label{prop_int_p}
  Let 
  \[
  A=\bigcup_{i=-\infty}^{+\infty}(a_i, b_i)\subset \R,
  \]
  where \[
  d(a_{i},b_i)=d(a_{i+1},b_{i+1})
  \;\mbox{ and }\;
  d(b_{i},a_{i+1})= d(b_{i-1},a_i).
  \] Then $(A,d)$ is plastic.
\end{propos}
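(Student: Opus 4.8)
The plan is to imitate the strategy used for $\R\setminus\Z$ (Theorem~3.9 in \citep{NaiPioWing}), but to treat the orientation–reversing case with care, since that is exactly where the original argument had a gap. First I would record what the hypotheses give: all components have the same length $L:=d(a_i,b_i)>0$ and all gaps have the same length $g:=d(b_i,a_{i+1})\ge 0$, so $A$ is periodic with period $p:=L+g$, i.e.\ $A+p=A$. Write $I_i=(a_i,b_i)$ for the connected components of $A$. A non-expansive $\varphi$ is $1$-Lipschitz, hence continuous, so $\varphi(I_i)$ is a connected subset of $A$ and therefore lies in a single component $I_{\sigma(i)}$. Since $\varphi$ is injective on the interval $I_i$, it is strictly monotone there and $\varphi(I_i)$ is an \emph{open} subinterval of $I_{\sigma(i)}$; by surjectivity, for each $j$ the open intervals $\{\varphi(I_i):\sigma(i)=j\}$ are pairwise disjoint with union the connected set $I_j$, which forces exactly one $I_i$ to map onto $I_j$. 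Thus $\sigma$ is a bijection of $\Z$ and $\varphi|_{I_i}\colon I_i\to I_{\sigma(i)}$ is a monotone $1$-Lipschitz surjection between intervals of equal length $L$.

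Next I would show $\varphi$ is an isometry on each component. If $\varphi|_{I_i}$ is increasing, the $1$-Lipschitz property makes $x\mapsto\varphi(x)-x$ non-increasing on $I_i$; its limits as $x\to a_i^+$ and as $x\to b_i^-$ are $a_{\sigma(i)}-a_i$ and $b_{\sigma(i)}-b_i$, which coincide because $b_k-a_k=L$ for every $k$. A monotone function with equal one-sided limits at the two ends of an interval is constant, so $\varphi|_{I_i}$ is the translation $x\mapsto x+c_i$; the decreasing case is identical with $x\mapsto\varphi(x)+x$ and yields $\varphi|_{I_i}\colon x\mapsto -x+c_i$. In particular $\varphi|_{I_i}$ is an isometry.

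Then I would pin down $\sigma$ using non-expansiveness across one gap. Taking $x_n\uparrow b_i$ in $I_i$ and $y_n\downarrow a_{i+1}$ in $I_{i+1}$ gives $d(x_n,y_n)\to g$, hence $\limsup_n d(\varphi(x_n),\varphi(y_n))\le g$; but the distance between any point of $I_{\sigma(i)}$ and any point of $I_{\sigma(i+1)}$ is at least $L+2g$ whenever $|\sigma(i)-\sigma(i+1)|\ge 2$, and $L+2g>g$ because $L>0$. So $|\sigma(i)-\sigma(i+1)|=1$ for every $i$ (the value $0$ is excluded by injectivity), and injectivity of $\sigma$ forces the sign of $\sigma(i+1)-\sigma(i)$ to be constant; hence either $\sigma(i)=i+c$ for all $i$ (the \emph{shift} case) or $\sigma(i)=-i+c$ for all $i$ (the \emph{reflection} case).

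Finally I would reduce to a map that fixes every component. In the shift case, translation $T_{cp}$ by $cp$ is an isometry of $A$ sending $I_i$ to $I_{i+c}$, so $\psi:=T_{cp}^{-1}\circ\varphi$ is a non-expansive bijection of $A$ with $\psi(I_i)=I_i$ for all $i$; each $\psi|_{I_i}$ is then the identity or the reflection of $I_i$ about its midpoint. The same straddling–gap computation rules out any component on which $\psi$ reverses orientation (such a component would send a pair at distance $\to g$ to images at distance $\to 2L+g$ or $\to L+g$), so $\psi=\mathrm{id}$ and $\varphi=T_{cp}$, an isometry. In the reflection case one argues identically with the reflection $R$ of $\R$ about $(a_c+b_0)/2$, which by periodicity and the equal–gaps hypothesis satisfies $R(I_i)=I_{-i+c}$ and is an isometry of $A$; then $\psi:=R^{-1}\circ\varphi$ fixes every component and the same argument gives $\psi=\mathrm{id}$, $\varphi=R$. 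Either way $\varphi$ is an isometry, so $A$ is plastic. The step I expect to be delicate is precisely this last reduction in the reflection case: one must verify that a genuine isometry $R$ of $A$ realising $i\mapsto -i+c$ exists, and that no ``mixed–orientation'' configuration of components survives non-expansiveness — the bookkeeping with the excluded endpoints $a_i,b_i$ is where the $\R\setminus\Z$ proof slipped.
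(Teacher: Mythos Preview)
Your argument is correct and follows exactly the route the paper indicates, namely the reasoning of Theorem~3.9 in \citep{NaiPioWing} for $\R\setminus\Z$, adapted to equal-length intervals with equal gaps. In fact you do a bit more than the paper asks: you spell out the reflection case $\sigma(i)=-i+c$, exhibit the global isometry $R$ of $A$ realising it, and rule out mixed orientations via the straddling-gap estimate, which is precisely the case the paper flags as having been missed in the original source.
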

\begin{rem}
Propositions \ref{prop_int_np} and \ref{prop_int_p} hold true with the closed intervals as well.
\end{rem}
\begin{rem}
On the other hand,  if we consider in the statement of Proposition \ref{prop_int_p} half-intervals 
\[
A=\bigcup_{i=-\infty}^{+\infty}[a_i, b_i)\subset \R
\;\mbox{ or }\;
A=\bigcup_{i=-\infty}^{+\infty}(a_i, b_i]\subset \R
\]
lead already to a non-plastic space.
\end{rem}
\begin{rem}
If we consider in the same statement, the set of the form 
\[
A=\bigcup_{i=-\infty}^{n}[a_i, b_i]\cup\bigcup_{i=n+1}^{+\infty}(a_i, b_i]\subset \R,\;\mbox{ where }\; n\in \N, 
\]
is also a non-plastic space.
\end{rem}
Fig.~\ref{fig3} illustrates the previous remark.
\begin{figure}[h!]
\begin{center}
\includegraphics[width=6cm]{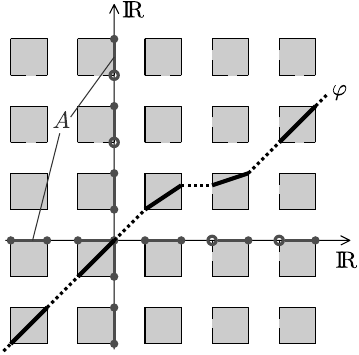}
\end{center}
\caption{Oppositely to Fig.~\ref{fig1}, half-open intervals allow that $\varphi$ does not pass entire rectangles in $A\times A$.
Rather, it might jump where the intervals can be glued to each other. Remark that this example contains a first half-open interval and all the following
intervals are half-open, too, cf.\ bijectivity. The topological properties of the intervals in $A$ enter the plasticity problem.}
\label{fig3}
\end{figure}
The reader easily provides more examples  which consist of open or closed intervals together with half-intervals, all of the same lengths. Again, we remark that the end-points of the intervals are critical points for the plasticity property.

\section{Conclusion}\label{secconclu}

The analysis of plastic sub-spaces $A$ of the real line $\R$ has shown that first, the Lipschitz-continuity of the map $\varphi\,:\,A\to A$ with Lipschitz constant $1$ leads to useful and instructive illustrations of the non-expansivity of the map $\varphi$, to which it is identical.

The plasticity property of a metric space turned out to contain two complementary aspects, a purely geometrical one and a topological one. Already on the real line $\R$, the different natures of both aspects become visible. Whereas the geometrical aspects is an extension of the non-expansivity of $\varphi$ on a simply-connected interval, the topological aspect leads to the question whether two or more sub-intervals can be glued at critical points by piecewise translations. Therefore, the investigation of sub-spaces of the real line $\R$ gives an appropriate framework for the investigation of the plasticity of metric spaces.

We expect that the interplay between the two natures of the problem gets more severe in higher dimensions. Already unions of rectangles and cuboids as sub-spaces of the $d$-dimensional Euclidean space $\R^d$ give a tremendous multiplicity of open, half-open and closed edges and sides -- complete or partial.

The named interplay between geometry and topology of the metric spaces gets more and more complicated and less intuitive the more abstract and the more elaborated the metric spaces are. We do not expect any clarification for e.\,g.\ metric spaces of functions before sub-spaces of the Euclidean spaces are understood.

Future research will concentrate on the question, what else can be said about plastic and non-plastic sub-spaces of the space $(\R,d)$. Furthermore, we will explore the extension of a metric space $A$ to larger sets in $A_{\mathrm{ex}}$ which contain $A$. In particular, the metric hull, i.\,e.\ the set  
\begin{equation}\label{eqmetricspan}
\operatorname{hull}_{A_{\mathrm{ex}}}(A)=\left\{x\in A_{\mathrm{ex}} \,:\,\exists y,z\in A\,: d(y,z)=d(y,x)+d(x,z)\right\}\subseteq A_{\mathrm{ex}},
\end{equation}
gives interesting perspectives in the context of the plasticity problem for the specification $A_{\mathrm{ex}}=\R$. We conjecture that the metric hull is the smallest proper extension of the metric space, which is simply connected in $A_{\mathrm{ex}}$ and where the plasticity is dominated by the geometry. There, the topology might be sub-ordinated. In the medium term, we hope for insight in the question how geometry and topology interact in the plasticity of a metric space.

\section*{Conflict of Interest Statement}

The authors declare that the research was conducted in the absence of any commercial or financial relationships that could be construed as a potential conflict of interest.

\section*{Author Contributions}

XX automatically generated after submission

DL writing -- review \& editing, methodology, interpretation

OZ writing -- review \& editing, conceptualization, formal analysis, project administration

First verbal form: The second author conveived the presented ideas and developed the theoretical results. The first author contributed the interpretation and illustration. Both authors discussed the results and contributed to the final manuscript. Both authors fully agree with the content of the article.


\section*{Funding}
 The research was partially supported by the Volkswagen Foundation grant within the frameworks of the international project ``From Modeling and Analysis to Approximation''. The second author was also partially supported by Akhiezer Foundation grant, 2023.

\section*{Acknowledgements}
The authors are grateful to Vladimir Kadets for valuable remarks and for pointing us the results about the extension of Lipschitz maps. We are also thankful to Ehrhard Behrends for drawing our attention to the problem considered in this article.

\bibliographystyle{Frontiers-Harvard} 

\begin{thebibliography}{10}

\bibitem{AnKaZa}  Angosto, C., Kadets, V., Zavarzina, O. (2019). Non-expansive bijections, uniformities and polyhedral faces. J. Math. Anal. Appl. 471 (1-2), 38–52. 
\bibitem{Behr} Behrends, E. (2020). Free University of Berlin, private communication.
\bibitem{CKOW2016} Cascales, B.,  Kadets, V., Orihuela, J.,  Wingler, E.J. (2016). Plasticity of the unit ball of a strictly convex Banach space. Revista de la Real Academia de Ciencias Exactas, F\'{\i}sicas y Naturales. Serie A. Matem\'aticas.   110(2), 723-727. 
\bibitem{FHuDeh}  Freudenthal, H., Hurewicz, W. (1936). Dehnungen, Verk\"urzungen, Isometrien. Fund. Math.  26, 120-122. 
\bibitem{HLZ} Haller, R., Leo, N., Zavarzina, O. (2022). Two new examples of Banach spaces with a plastic unit ball. Acta et Commentationes Universitatis Tartuensis de Mathematicathis.26(1), 89–101.   

\bibitem{KZ2017} Kadets, V., Zavarzina, O.(2018). Nonexpansive bijections to the unit ball of the     $\ell_1$-sum of strictly convex Banach spaces. Bull. Aust. Math. Soc. 97(2), 285–292. 
    \bibitem{KZ2023} Kadets, V., Zavarzina, O. (2023). Plastic pairs of metric spaces. J. Math. Anal. Appl.  127435.

   \bibitem{KarZav} Karpenko, I., Zavarzina, O. (2022). Linear expand-contract plasticity of ellipsoids revisited. Matematychni Studii. 57(2), 192–201. 
   \bibitem{Kir} Kirszbraun, M. D. (1934). Über die zusammenziehende und Lipschitzsche Transformationen, Fundamenta Mathematicae, 22, 77–108.
    \bibitem{Leo}Leo, N. (2022). Plasticity of the unit ball of $c$ and $c_0$. J. Math. Anal. Appl. 507(1), 125718.
    \bibitem{McSh} McShane, E. J. (1934) Extension of range of functions, Bull. Amer. Math. Soc., 40, 837–842. 
 \bibitem{NaiPioWing}  Naimpally S. A., Piotrowski Z., Wingler E. J. (2006).  Plasticity in metric spaces, J. Math. Anal. Appl. 313, 38-48. 
\bibitem{Zav}Zavarzina, O. (2018). Nonexpansive bijections between unit balls of Banach spaces. Ann. Funct. Anal. 9(2), 271–281. 
\bibitem{Zav2} Zavarzina, O. (2019).  Linear expand-contract plasticity of ellipsoids in separable Hilbert spaces. Matematychni Studii. 51(1), 86--91.
\end{thebibliography}




\end{document}